\newtheorem{theorem}{Theorem}[section]
\newtheorem{lemma}[theorem]{Lemma}
\newtheorem{statement}[theorem]{Statement}
\newtheorem{proposition}[theorem]{Proposition}
\newtheorem{conjecture}[theorem]{Conjecture}
\theoremstyle{definition}
\newtheorem{definition}{Definition}[section]
\newtheorem{exmp}{Example}[section]
\theoremstyle{remark}
\newtheorem{remark}{Remark}
\newcommand{\bs}{\mathop{\rm BS}\nolimits}
\newcommand{\cone}{\mathop{\rm cone}\nolimits}
\newcommand{\conv}{\mathop{\rm conv}\nolimits}
\title{On Scarf's Theorem for Generalized Cooperative Games}
\author{Mikhail V. Bludov$^{1,2}$ and Oleg R. Musin$^3$}
\date{
    $^1$HSE University, Russian Federation\\
    $^2$Higher School of Mathematics, MIPT, Russian Federation\\
    $^3$University of Texas Rio Grande Valley, Brownsville, TX 78520
}
\begin{document}

\maketitle

\begin{abstract}
In this paper, we study a generalization of cooperative games with
non-transferable utility. In our model, coalitions are replaced by firms:
each firm is assigned a resource vector, while the set of utility vectors of
a coalition is replaced by a general comprehensive set of feasible payoff
vectors of this firm in a common payoff space. We introduce the notions of
core and fractional core for such games
and relate their existence to homotopy invariants of covers associated with
the game. The main result shows that the fractional core is nonempty
precisely when the associated cover is homotopically nontrivial. As a
consequence, we obtain a Scarf-type theorem for generalized cooperative
games.
\end{abstract}

\sloppy

\noindent\textbf{Notation.}
In this paper, we use the following notation.
\begin{itemize}
\item We write
\([n]=\{1,\dots,n\}\) for the set of \(n\) elements (players). The power set
\(2^{[n]}\) of \([n]\) is the set of all subsets of \([n]\), including the
empty set and \([n]\) itself.

\item Unless stated otherwise, subsets \(S\subset [n]\) and \(S\subset V\)
are nonempty. A {\em proper} subset is a subset that is not equal to the
whole set.

\item For \(S\subset [n]\), the vector
\(\mathbf{1}_S\) is the characteristic vector of \(S\) in \(\mathbb{R}^n\),
and \(\mathbb{R}^S\) denotes the subspace of \(\mathbb{R}^n\) spanned by
\(\{\mathbf{1}_{\{i\}}\}_{i\in S}\). We write
\(e_i=\mathbf{1}_{\{i\}}\) for the standard basis vectors.

\item The nonnegative orthants in \(\mathbb{R}^n\) and \(\mathbb{R}^S\)
are denoted by \(\mathbb{R}^n_+\) and \(\mathbb{R}^S_+\), respectively.

\item For a set \(A\) in a Euclidean space, \(\conv(A)\) denotes its convex
hull and \(\cone(A)\) denotes its conical hull.

\item The simplex \(\Delta^{n-1}\) is the convex hull
\(\conv(\mathbf{1}_{\{1\}},\dots,\mathbf{1}_{\{n\}})\), and
\(M\Delta^{n-1}\) denotes the convex hull \(\conv(M e_1,\dots,M e_n)\),
where \(M>0\).

\item For \(S\subset[n]\), the face \(\Delta_S\) is the convex hull of
\(\{\mathbf{1}_{\{i\}}\}_{i\in S}\).

\item For two vectors \(x,y\in\mathbb{R}^n\), we write \(y\leq x\) if and
only if \(y_i\leq x_i\) for all \(i=1,\dots,n\).

\item \(S^k\) denotes the \(k\)-dimensional sphere and \(B^k\) the
\(k\)-dimensional ball; in particular, \(S^k=\partial B^{k+1}\).
\end{itemize}

\medskip

\section{Introduction}

The introduction is decomposed into subsections. In Sections 1.1 and 1.2,
we recall the basic definitions and results from cooperative game theory. In
Section 1.3, we introduce the concept of the fractional core. In Section
1.4, we briefly discuss the algebraic topology behind the KKMS theorem and
give references to some extensions of the KKMS theorem. In Section 1.5, we
introduce the main definitions of this paper and formulate the main results.

\subsection{Bondareva--Shapley theorem.}

We begin with the classical model of cooperative games with transferable
utility.

\begin{definition}\label{TU} A cooperative game with {\em transferable utility} (TU game) is a pair $([n], \nu)$, where $[n]=\{1,\dots,n\}$ is a set of players and the {\em characteristic function} $\nu:2^{[n]}\rightarrow \mathbb{R}$ satisfies $\nu(\emptyset)=0$. Subsets $S\subset [n]$ are called {\em coalitions.} 
\end{definition}

The characteristic function $\nu$ represents the utility that a coalition can achieve.

\begin{definition}\label{payoff}
A vector $x=(x_1,...,x_n)\in \mathbb{R}^n$ is a \emph{payoff vector} if $  \sum_{i=1}^n x_i=\nu([n]).$ 
The set of all payoff vectors is denoted by $E(\nu)$. 

The \emph{core} of the game $([n],\nu)$ is the set of payoff vectors $x\in E(\nu)$ such that
\[
    \sum_{i\in S}x_i\geq \nu(S)
    \quad \text{for every } S\subset [n].
\]
\end{definition}

In cooperative game theory, the core is a set of payoff vectors (allocations) where no subset of players (a ``coalition'') can do better by breaking away and acting alone. It ensures stability so that all players are motivated to stay in the grand coalition.



\begin{definition}\label{FamBal}
A family $\Phi=\{S_1,\ldots,S_m\}$ of subsets of $[n]$ is \emph{balanced} if there exist non-negative weights $\{\lambda_{S_k}\}$ such that
\[
    \sum_{k=1}^m \lambda_{S_k}\mathbf{1}_{S_k}=\mathbf{1}_{[n]}.
\]
The family $\Phi$ is \emph{minimal} if it contains no proper balanced subfamilies.
\end{definition}

This definition has a geometric interpretation. Let $\Delta_{[n]}=\operatorname{conv}(\mathbf{1}_{\{1\}},\dots,\mathbf{1}_{\{n\}})$ be the standard simplex, and let $\Delta_S$ denote the face corresponding to a coalition $S$. If
\[
    c_S=\frac{1}{|S|}\mathbf{1}_S
\]
is the barycenter of $\Delta_S$, then a family $\Phi$ is balanced if and only if
\[
    c_{[n]}\in \operatorname{conv}\{c_S\}_{S\in\Phi}.
\]

\begin{definition}\label{TUBal} We say that a TU game $([n],\nu)$ is \emph{balanced} if for every balanced family $\Phi$ with weights $\{\lambda_S\}$ we have 
\[
    \sum_{S\in\Phi}\lambda_S\nu(S)\leq \nu([n]).
\]
\end{definition}

Actually, the core of a TU game may be empty. The Bondareva--Shapley theorem gives necessary and sufficient conditions for core nonemptiness in terms of balancedness.

\begin{theorem}[Bondareva \cite{Bon}, Shapley \cite{Sh67}]\label{BShTh}
A cooperative TU game has a nonempty core if and only if it is balanced.
\end{theorem}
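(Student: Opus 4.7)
The plan is to reduce the theorem to linear programming duality. First I would reformulate core non-emptiness as an LP condition: since the constraint for $S=[n]$ already forces $\sum_{i} x_i \geq \nu([n])$, the core is non-empty if and only if the primal
\[
(P)\colon \quad \min \sum_{i=1}^n x_i \quad \text{subject to } \sum_{i \in S} x_i \geq \nu(S) \text{ for all } S \subset [n],
\]
with $x \in \mathbb{R}^n$ free in sign, attains optimum exactly $\nu([n])$; any optimizer then satisfies $\sum_i x_i = \nu([n])$ together with all the coalitional inequalities, hence lies in the core.

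Next I would write down the dual. Because the primal variables are unrestricted, the dual has equality constraints on the nonnegative multipliers $\lambda_S$:
\[
(D)\colon \quad \max \sum_{S \subset [n]} \lambda_S \nu(S) \quad \text{subject to } \sum_{S \ni i} \lambda_S = 1 \text{ for all } i \in [n], \;\; \lambda_S \geq 0.
\]
The feasibility constraint rewrites as $\sum_S \lambda_S \mathbf{1}_S = \mathbf{1}_{[n]}$ with $\lambda_S \geq 0$, which is exactly the condition in Definition \ref{FamBal} that the support $\{S : \lambda_S > 0\}$ is a balanced family with weights $\lambda_S$. Thus dual-feasible vectors correspond bijectively to balanced systems of weights on $2^{[n]}$, and the condition defining a balanced game says precisely that every dual-feasible $\lambda$ has objective value at most $\nu([n])$.

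Finally I would invoke strong LP duality to conclude. The dual is always feasible (take $\lambda_{[n]} = 1$ and the rest zero), giving dual value $\nu([n])$, so $\mathrm{val}(D) \geq \nu([n])$; the primal is also feasible (take $x_i$ large enough), so strong duality applies and $\mathrm{val}(P) = \mathrm{val}(D)$, both finite and both at least $\nu([n])$. By the reformulation above, balancedness is equivalent to $\mathrm{val}(D) \leq \nu([n])$, hence to $\mathrm{val}(D) = \nu([n])$, hence by strong duality to $\mathrm{val}(P) = \nu([n])$, hence to the existence of a core element. The only step requiring genuine attention is matching the LP sign conventions so that dual feasibility really coincides with the combinatorial notion of a balanced weight system; once that translation is secured, no substantive obstacle remains, because the theorem is a direct consequence of LP duality plus the trivial observation that $\nu([n])$ is always a lower bound on both optima.
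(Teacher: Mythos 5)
Your proposal is correct, and it is precisely the linear-programming-duality argument that the paper itself points to as Bondareva's original proof (the paper only cites \cite{Bon} and \cite{Sh67} and does not reproduce a proof, remarking that Bondareva's argument "follows from the strong linear duality theorem"). Your reduction of core non-emptiness to the primal optimum equalling $\nu([n])$ and the identification of dual-feasible vectors with balanced weight systems are both accurate, so nothing further is needed.
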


The initial proof by Bondareva in \cite{Bon} follows from the strong linear duality theorem. In \cite{Sh67}, Shapley gave an elementary proof.
 

\subsection{Scarf's theorem}

In the TU setting, a coalition \(S\) is assigned a single value \(\nu(S)\),
and the members of \(S\) are assumed to be able to redistribute this value
among themselves without restrictions. Thus the efficient payoff vectors of
\(S\) lie on the hyperplane \(\sum_{i\in S}x_i=\nu(S)\). This assumption is
often too restrictive. Imagine, for instance, a situation where two players
work together, but only one of them can receive the payment. If he wants to
transfer part of this payment to the other player, he has to pay a transaction
fee to the bank. Thus the resulting set of payoff vectors may form a nonlinear
subset of \(\mathbb{R}^2\). This example leads naturally to cooperative games
with non-transferable utility.

\begin{definition}\label{NTU} 
Suppose we have a set of players \( [n] = \{1, \dots, n\} \). A cooperative game with {\em non--transferable utility} (NTU game) is a pair \( ([n], V) \), where
\[
V = \{V(S) \subset \mathbb{R}^S \mid S \subset [n], ~ S \neq \emptyset \}
\]
is a family of subsets satisfying the following properties:

\begin{itemize}
    \item \( V(S) \) is a closed, nonempty, and proper subset of \( \mathbb{R}^S \) for all nonempty subsets \( S \subset [n] \).
    \item \( V(S) \) is \textit{comprehensive}, i.e., \( V(S) - \mathbb{R}^{S}_{+} \subset V(S) \) for all nonempty subsets \( S \subset [n] \). Equivalently, if \( x \in V(S) \), then for all vectors \( x' \) such that \( x' \leq x \), we have \( x' \in V(S) \).
    \item \(V(S)\) is bounded: there exists \(M \in \mathbb{R}\) such that if \(x \in V(S)\) and \(x_i \geq 0\) for all \(i \in S\), then \(x_i < M\).
\end{itemize}
\end{definition}

\medskip 

For every nonempty subset \( S \subset [n] \), define the cylinder
\[
U(S) = V(S) \times \mathbb{R}^{[n] \setminus S}.
\]
For a coalition \(S\), the subset \(V(S)\) represents the set of feasible
payoff vectors. If a vector \(x\in \operatorname{int}(V(S))\), then the
coalition \(S\) can strictly improve this payoff. Thus
\(\operatorname{int}(U(S))\) represents the set of allocations blocked by
coalition \(S\).

\medskip

A solution of the NTU game is an allocation vector \(x\in V([n])\) such that no coalition can strictly improve its payoff by acting alone. This leads to the following definition of the core. The \textit{core} \(C(V)\) of the game \(([n],V)\) is the set
\[
C(V) = V([n]) \setminus \bigcup\limits_{S \subset [n]} \operatorname{int}(U(S)).
\]

\medskip

Balancedness can be extended from TU games to NTU games. An NTU game is \emph{balanced} if for every balanced family \(\Phi\),
\[
\bigcap_{S \in \Phi} U(S) \subset V([n]).
\]
Scarf's theorem gives a sufficient condition for nonemptiness of the core.

\begin{theorem}[Scarf \cite{Scarf}]\label{Scarf}
Every balanced cooperative NTU game has a nonempty core.
\end{theorem}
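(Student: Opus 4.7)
The plan is to deduce Scarf's theorem from the Knaster--Kuratowski--Mazurkiewicz--Shapley (KKMS) theorem --- the simplicial refinement of Bondareva--Shapley (Theorem~\ref{BShTh}), which asserts that any closed cover $\{C_S\}_{\emptyset\neq S\subset[n]}$ of $\Delta^{n-1}$ satisfying the face condition $\Delta_T\subset\bigcup_{S\subset T}C_S$ for every $T\subset[n]$ admits a balanced family $\Phi$ with $\bigcap_{S\in\Phi}C_S\neq\emptyset$. The rough idea is that KKMS will produce a candidate allocation, and the balancedness hypothesis on the game will promote it to a genuine element of $C(V)$.

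First I would normalize and parametrize. Using comprehensiveness and boundedness of every $V(S)$, I restrict to a sufficiently large compact cube $[-M,M]^n$ and fix a continuous surjection $\phi:\Delta^{n-1}\to \partial^+ V([n])$ onto the weak Pareto frontier of the grand coalition, for instance by shooting rays from a fixed interior point $u^\circ\in V([n])$ in directions controlled by $t$. The parametrization is arranged so that the face $\Delta_T\subset\Delta^{n-1}$ is sent to the slice of $\partial^+ V([n])$ on which players outside $T$ receive only reservation payoffs. For each nonempty $S\subset[n]$ set
\[
C_S \;=\; \{\,t\in\Delta^{n-1}\ :\ \phi(t)\notin\operatorname{int} U(S)\,\},
\]
which is closed because $U(S)$ is closed, and whose full intersection $\bigcap_{S\subset[n]}C_S$ consists exactly of the parameters $t$ for which $\phi(t)\in C(V)$. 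The KKMS face condition then follows by construction: for $t\in\Delta_T$ no coalition meeting $[n]\setminus T$ can strictly improve $\phi(t)$, and weak Pareto optimality on the slice forces at least one $S\subset T$ to be tight, so that $t\in C_S$.

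Applying KKMS yields a balanced family $\Phi$ and a point $t^*\in\bigcap_{S\in\Phi}C_S$; set $x^*=\phi(t^*)\in V([n])\cap\bigcap_{S\in\Phi}U(S)$, so $x^*$ is already unblocked by every $S\in\Phi$. The main obstacle --- and the step at which the balancedness hypothesis of the NTU game is actually used --- is upgrading this partial tightness to hold for \emph{every} coalition $S\subset[n]$. The route I would take is by contradiction combined with a perturbation: if some $S_0\notin\Phi$ strictly blocked $x^*$, then either (i) one enlarges $\Phi$ to a balanced family $\Phi'\ni S_0$ and uses $\bigcap_{S\in\Phi'}U(S)\subset V([n])$ together with comprehensiveness to produce a feasible allocation strictly above $x^*$, contradicting its Pareto optimality on $\partial^+ V([n])$; or (ii) one refines the triangulation underlying $\phi$ and extracts a sequence of KKMS points $x^*_k$ whose balanced families eventually exhaust the offending coalitions, then passes to a subsequential limit in the compact set $\partial^+ V([n])$. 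This upgrade step is the technical heart of the argument; the first two paragraphs are essentially bookkeeping enabled by comprehensiveness and boundedness.
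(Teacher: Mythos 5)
Your overall plan---deduce Scarf from KKMS---is the same as the paper's (which follows Shapley), but the cover you build is oriented the wrong way, and this is not a cosmetic issue: it is precisely why you are left with the ``upgrade step'' that you correctly identify as the heart of the matter and then cannot carry out. You set $C_S=\{t:\phi(t)\notin\operatorname{int}U(S)\}$, so KKMS hands you a balanced family $\Phi$ and a point $x^*=\phi(t^*)$ that is merely \emph{unblocked by the coalitions in $\Phi$}. Two things now go wrong. First, the balancedness hypothesis of the game is the inclusion $\bigcap_{S\in\Phi}U(S)\subset V([n])$; to invoke it you need a point that \emph{belongs to} each $U(S)$, $S\in\Phi$, whereas your construction only tells you $x^*$ is not in the interiors---a point can fail to lie in $U(S)$ altogether while lying outside $\operatorname{int}U(S)$. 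So in your alternative (i) there is no point to which the inclusion applies, and no contradiction is produced. Second, nothing controls the coalitions outside $\Phi$: enlarging $\Phi$ to a balanced $\Phi'\ni S_0$ does not put $t^*$ into $C_{S_0}$, and the limiting argument in (ii) has no mechanism forcing the balanced families along the sequence to ``exhaust'' the blocking coalitions or the limit point to be unblocked by all of them.

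The paper's sketch avoids both problems by defining the cover by \emph{membership} rather than non-blocking. Set $\tau(x)=\sup\{t\mid x+t\mathbf{1}_{[n]}\in\bigcup_{S}U(S)\}$ and, on a large simplex, let $F_S=\{p\mid p+\tau(p)\mathbf{1}_{[n]}\in U(S)\}$. Then the point $x=p+\tau(p)\mathbf{1}_{[n]}$ produced by KKMS satisfies two things simultaneously: it lies \emph{in} $U(S)$ for every $S$ in the balanced family (exactly what the balancedness hypothesis needs to conclude $x\in V([n])$), and it lies outside $\operatorname{int}U(S)$ for \emph{every} coalition $S$, automatically, because $\tau$ is the supremum over the union of all the cylinders. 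The unblockedness you were trying to obtain by an a posteriori upgrade comes for free from the maximality of $\tau$, and no upgrade step is needed. I would recommend reworking your argument around this cover; your parametrization of the Pareto frontier of $V([n])$ and the face-condition verification can then be replaced by the standard check that on a face of the large simplex the tight coalitions are contained in the corresponding subset of players.
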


Scarf proved this theorem using an extension of the simplex method, analogous to algorithmic proofs of Brouwer's fixed point theorem via Sperner's lemma.

TU games can be viewed as a particular class of NTU games. Namely, to a TU
game \(([n],\nu)\) one associates the NTU game given by
\[
    V_\nu(S)=\left\{y\in\mathbb{R}^S\mid \sum_{i\in S}y_i\leq \nu(S)\right\}.
\]
Then \(U(S)=V_\nu(S)\times\mathbb{R}^{[n]\setminus S}\), and the condition
\(x\notin\operatorname{int}(U(S))\) is exactly the usual TU inequality
\(\sum_{i\in S}x_i\geq\nu(S)\).
For such games, Scarf's balancedness condition is equivalent to the usual
balancedness condition from the Bondareva--Shapley theorem. Thus, on this
subclass, the same balancedness condition is both sufficient and necessary for
nonemptiness of the core. This equivalence does not extend to general NTU
games: Scarf's theorem gives only a sufficient condition. This gap led to
refinements of balancedness. Billera introduced \(\pi\)-balancedness in
\cite{Billera}, and Predtetchinski and Herings introduced
\(\Pi\)-balancedness in \cite{Pred}; the latter gives a necessary and
sufficient condition for nonemptiness of the core of an NTU game.

\subsection{Fractional form of Scarf's theorem}
Scarf's theorem gives a sufficient condition for nonemptiness of the core,
but the core itself may be empty. We therefore consider a more flexible
solution concept. This concept is not widely represented in the cooperative
game theory literature. To the best of our knowledge, in the context of NTU
games it appears in a paper by Danilov \cite{Dan} and in a paper by Biró and
Fleiner \cite{Fleiner}.

\medskip

Let \(\Phi\) be a balanced family with weights \(\{\lambda_S\}\). A point \(x\in \mathbb{R}^n\) is \emph{admissible} with respect to \(\Phi\) if
\[
    x \in U(S) \quad \text{whenever } \lambda_S>0.
\]
The \emph{fractional core} of the game is the set of points \(x\) that are admissible with respect to some balanced family and such that
\[
    x \notin \operatorname{int}(U(S))
    \qquad \text{for all } S\subset [n].
\]
Thus, a point from the fractional core is stable against deviations by coalitions, but it is allowed to arise from a balanced fractional use of several coalitions rather than from the grand coalition alone.

\medskip

We now illustrate the difference between the core and the fractional core in a
simple TU game, using the identification of TU games with the corresponding
NTU games described in the previous subsection.

\begin{exmp}
Consider a 3-player cooperative game where players face losses:
\[
\begin{aligned}
    \nu(\{1\}) &= -10, \quad \nu(\{2\}) = -15, \quad \nu(\{3\}) = -20,\\
    \nu(\{1,2\}) &= -22, \quad \nu(\{1,3\}) = -28, \quad \nu(\{2,3\}) = -32,\\
    \nu(\{1,2,3\}) &= -35.
\end{aligned}
\]
The allocation \((-8,-12,-15)\) belongs to the core, but if the grand
coalition's utility is changed to \(\nu(\{1,2,3\})=-100\), then the core
becomes empty. However, for the balanced family
\[
    \Phi=\{\{1,2\},\{1,3\},\{2,3\}\},\qquad
    \lambda_{\{1,2\}}=\lambda_{\{1,3\}}=\lambda_{\{2,3\}}=\frac12,
\]
the vector \(x=(-9,-13,-19)\) is admissible and does not lie in the interiors
of the coalitional cylinders \(U(S)\). Hence \(x\) belongs to the fractional
core. Thus the fractional core may still be nonempty even when the core is
empty.
\end{exmp}

The intuition behind the fractional core can be described as follows. Assume
that each player possesses one unit of an individual resource, for instance
time. A player may distribute this time among several coalitions, and a
coalition \(S\) operates when each of its members contributes the same amount.
Thus a family of coalitions is balanced when the players can fully allocate
their resources. We view \(U(S)\) as the set of global payoff vectors feasible
for coalition \(S\). An admissible point is a global payoff vector feasible for
every coalition from some balanced family. Thus the fractional core may be
interpreted as a stable and efficient allocation of players' time among
coalitions. In \cite{Fleiner}, Biró and Fleiner considered a more general
model of NTU games with capacities, where a coalition may require different
amounts of contribution from different players.
 
\begin{theorem}
\label{SDan}
Every NTU game has a fractional core.
\end{theorem}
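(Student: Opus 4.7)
The plan is to reduce the existence of a fractional core in an arbitrary NTU game to Scarf's theorem by \emph{enlarging the feasible set of the grand coalition} until the game becomes balanced, and then recognizing the resulting core as a fractional core of the original game.

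Concretely, I would leave $V(S)$ unchanged for every proper $S \subsetneq [n]$ and replace $V([n])$ by
\[
V'([n]) \;=\; \bigcup_{\Phi \text{ balanced}} \;\bigcap_{S \in \Phi} U(S).
\]
Since the number of families $\Phi \subset 2^{[n]}$ is finite, so is the number of balanced ones; hence $V'([n])$ is a finite union of closed sets and is itself closed. It is nonempty because taking $\Phi=\{[n]\}$ (balanced with weight $1$) yields $V([n]) \subset V'([n])$, and it is comprehensive because each $U(S)$ is. Boundedness above carries over coordinate by coordinate: every balanced family covers $[n]$, so for each $i$ any point of $\bigcap_{S \in \Phi} U(S)$ sits in the $i$-th slot of some $V(S)$ with $i \in S$, where it is controlled by the boundedness constant of $V(S)$. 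Consequently $([n],V')$ is a valid NTU game.

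Next I would check that $([n], V')$ is balanced in the sense of the definition preceding Theorem \ref{Scarf}: for any balanced family $\Phi$ one has $\bigcap_{S \in \Phi} U(S) \subset V'([n])$ by construction. Scarf's theorem then produces a point
\[
x \;\in\; V'([n]) \;\setminus\; \bigcup_{S \subset [n]} \operatorname{int}(U(S)).
\]
Since $x \in V'([n])$, there exists a balanced family $\Phi_0$ with positive weights $\{\lambda_S\}_{S \in \Phi_0}$ such that $x \in U(S)$ for every $S \in \Phi_0$, which is exactly admissibility. Combined with $x \notin \operatorname{int}(U(S))$ for all $S \subset [n]$, this places $x$ in the fractional core of the original game $([n], V)$.

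The step I expect to be most delicate is verifying that $V'([n])$ really satisfies all the hypotheses required by Scarf's theorem, in particular properness ($V'([n]) \neq \mathbb{R}^n$) and the uniform upper bound on the positive orthant. Both rely on the observation that every balanced collection covers $[n]$, which couples each coordinate to at least one of the bounded sets $V(S)$; once this is in place, the argument is a direct application of Scarf followed by unpacking the definitions of admissibility and the fractional core.
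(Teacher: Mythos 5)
Your argument is correct, but it takes a genuinely different route from the paper. The paper follows Shapley's approach: assuming $V(\{i\})=0$, it defines $\tau(x)=\sup\{t \mid x+t\mathbf{1}_{[n]}\in\bigcup_S U(S)\}$, uses it to build a KKMS cover $\{F_S\}$ of a large simplex (where $p\in F_S$ iff $p+\tau(p)\mathbf{1}_{[n]}\in U(S)$), and extracts the fractional core point directly from the balanced family produced by the KKMS lemma. You instead pass to the \emph{balanced cover} of the game, replacing $V([n])$ by the union of all balanced intersections $\bigcap_{S\in\Phi}U(S)$, which makes the modified game balanced by construction, and then invoke Scarf's Theorem \ref{Scarf} as a black box; membership in the enlarged $V'([n])$ is then literally the admissibility condition, and since $V([n])\subset V'([n])$ the non-blocking conditions transfer back to the original game. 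This is a legitimate and standard reduction (not circular, since Theorem \ref{Scarf} is established independently), and it is arguably cleaner in that it avoids the cover construction entirely; what it costs you is the careful verification that $V'([n])$ still satisfies the NTU axioms. Your sketch of that verification is essentially right, with two small points worth making explicit: (i) properness and the upper bound both use that for each coordinate $i$ every balanced family contains some $S\ni i$ with $\lambda_S>0$, and that the boundedness axiom applies because one tests points all of whose coordinates are non-negative (e.g.\ $t\mathbf{1}_{[n]}$ for large $t>0$ to get properness); (ii) when checking balancedness of the modified game for families $\Phi$ containing $[n]$, the cylinder $U'([n])=V'([n])$ has changed, but the containment is then trivial. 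The paper's KKMS route, by contrast, is the one that generalizes to the rest of the paper, since the cover $\{F_S\}$ and the map $\tau$ are exactly the objects reused in the Cover Inducing Lemma and Theorem \ref{equilpoint}.
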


This theorem may be viewed as a fractional form of Scarf's theorem. It follows from Scarf's argument \cite{Scarf}, although Scarf did not formulate the result in the language of fractional cores. A closely related idea is implicit in Danilov \cite{Dan}, where the terminology of fractional cores is not used. Related formulations of fractional core elements  appear in \cite{Fleiner}.

\subsection{KKMS theorem.}
Theorem \ref{SDan} is of topological origin. To see this, we turn to Shapley's approach via coverings. In \cite{Sh}, Shapley showed that an NTU game defines a KKMS covering of a simplex. This gives a balanced intersection inside the simplex, from which the standard form of Scarf's theorem can be easily deduced. In fact, this balanced intersection corresponds exactly to the fractional core of the game. Thus, from this point of view, the existence of the fractional core is a direct consequence of the KKMS theorem.

\medskip
 
The KKMS theorem itself relies on two key topological facts:
\begin{enumerate}
    \item A continuous map \(f:\Delta^{n-1}\rightarrow \Delta^{n-1}\) whose restriction to the boundary has non-zero degree is surjective.
    \item A map \(f:\Delta^{n-1}\rightarrow \Delta^{n-1}\) satisfying \(f(\sigma)\subset \sigma\) for every face \(\sigma\subset \Delta^{n-1}\) has degree \(1\) on the boundary.
\end{enumerate}
Thus the KKMS theorem may be viewed as a consequence of the fact that the corresponding map has degree \(1\) on the boundary of the simplex. The balanced intersection appears because such a map cannot avoid the barycenter of the simplex.

\medskip

In \cite{Polytopality}, the authors investigated the connection between
simple games and the geometry and topology of simplicial complexes. Since
Shapley's work, various extensions of the KKM and KKMS theorems have also
been developed. Komiya generalized the KKMS theorem to the polytopal case in
\cite{Komiya}. O. Musin further explored connections between KKM-type lemmas
and homotopy invariants \cite{MusH, MusBC, MusWu, MusSpT}.

For the present paper, the most important reference is \cite{Blu}. In that
paper, a general approach to covers and balanced sets was developed, and
significant generalizations of Komiya's theorem were obtained. While
Komiya's theorem deals with covers of a polytope and points assigned to its
faces, \cite{Blu} replaces this picture by a cover of a sphere, or more
generally of a topological space, together with an abstract configuration of
points. The main idea is that, if a cover has no balanced intersections, then
it defines a map from the underlying space to the simplicial complex of
unbalanced subfamilies. This gives a well-defined homotopy class associated
with the cover. If this class is nontrivial on the boundary of a ball, then
any extension of the cover to the ball must have a balanced intersection.
This construction is the main topological motivation for the generalized
cooperative games considered below. We recall the precise definitions in the
preliminaries.

\subsection{Generalizations of Scarf's theorem}

We now introduce the formal definition of generalized games considered in this
paper.

\begin{definition}\label{GNTP}
A \emph{generalized NTU cooperative game} is a triple
\((U,V,r)\), where \(U=\{U_1,\dots,U_m\}\), \(U_i\subset\mathbb{R}^n\),
and \(V=\{v_1,\dots,v_m\}\subset\mathbb{R}^d\), satisfying the following
conditions:
\begin{enumerate}
    \item Each \(U_i\) is closed, proper, nonempty and comprehensive, i.e.
    \(U_i-\mathbb{R}^n_+\subset U_i\).
    \item Each \(v_i\) satisfies \(\langle v_i,\mathbf{1}_{[d]}\rangle>0\);
    the cone \(\cone(V)\) is full-dimensional in \(\mathbb{R}^d\), and
    \(r\neq0\), \(r\in\operatorname{int}(\cone(V))\).
\end{enumerate}
\end{definition}

For shortness, we will refer to generalized NTU cooperative games simply as
games.

\medskip

\begin{definition}
A set of firms \(S\subset V\) is \emph{\(r\)-balanced} if \(r\in\cone(S)\), or
equivalently, if \(r\) is a non-negative linear combination of the vectors
\(v_i\in S\). The family of all \(r\)-balanced sets is denoted by
\(\bs(V,r)\).
\end{definition}

\medskip

\begin{definition}
A point \(x\in\mathbb{R}^n\) is \emph{admissible} if, for some \(r\)-balanced set \(S\subset V\) with balancing weights \(\{\lambda_i\}_{v_i\in S}\), we have
\[
    x\in U_i \qquad \text{whenever } \lambda_i>0.
\]
The \emph{fractional core} of the game is the set of admissible points \(x\) such that
\[
    x\notin \operatorname{int}(U_i)
    \qquad \text{for all } i=1,\dots,m.
\]
\end{definition}

\medskip

If we have a firm \(v_{m+1}=r\), then this firm is distinguished as the grand
firm, and \(U_{m+1}\) plays the role of the feasible payoff set of the grand firm. In
this case the \emph{core} of the game is defined by
\[
    C(U,V,r)=U_{m+1}\setminus \bigcup_{i=1}^{m+1}\operatorname{int}(U_i).
\]

We now consider a simple example showing that, in generalized games, even the
fractional core may be empty.

\begin{exmp}
Let the resource vector be \(r=(1,1)\), and let there be two firms
\(v_1=(1,0)\) and \(v_2=(0,1)\). The payoff space is \(\mathbb{R}\). Suppose
that for the first firm only payoff vectors \(x\leq 0\) are feasible, while
for the second firm all payoff vectors \(x\leq1000\) are feasible. Thus
\[
    U_1=(-\infty,0],\qquad U_2=(-\infty,1000].
\]
The only \(r\)-balanced set is \(\{v_1,v_2\}\). Hence every admissible point
lies in \(U_1\cap U_2=(-\infty,0]\), which is contained in
\(\operatorname{int}(U_2)\). Therefore the fractional core is empty.
\end{exmp}

Although the example is formal, it may be read as a simple hiking problem:
one friend cannot climb above height \(0\), while the other can climb up to
height \(1000\). If they go together, the second friend is disappointed; if
only the second friend goes, the first friend's resource is unused.

The definition above is purely formal, and we do not fix any particular
economic interpretation. One possible intuition is to view \(v_i\) as the
resource requirement of firm \(i\), and \(U_i\) as the set of payoff vectors
feasible for this firm; points in \(\operatorname{int}(U_i)\) are then
allocations blocked by it. An \(r\)-balanced family may be regarded as an
active family consuming the whole resource vector. The fractional-core problem
asks whether some active family admits a payoff vector feasible for all its
firms and not strictly blocked by any firm.
 
In the preliminaries, we recall the definition of a homotopically nontrivial
cover. In Section 3, we apply this notion to the cover associated with a
generalized game and define homotopically nontrivial games. We now give an
informal description of homotopical nontriviality of a game. After the
normalization process, a generalized game determines a configuration of
points \((\bar V,\bar r)\) and a cover of a sphere in the payoff space. If
this cover has no balanced intersection, it determines a map from this
sphere to the sphere in
\(\operatorname{aff}(\bar V)\) centered at \(\bar r\). If this map
represents a nontrivial homotopy class, then the original game is called
homotopically nontrivial. With this terminology, the main result
characterizes the existence of the fractional core for generalized games.

\begin{theorem}[Main Theorem]\label{GenTheorem}
A game \((U,V,r)\) has a nonempty fractional core if and only if it is homotopically nontrivial.
\end{theorem}

For balanced games, we obtain a Scarf-type result. Namely, suppose that one of the firms is the grand firm \(v_{m+1}=r\). The game \((U,V,r)\) is called \emph{balanced} if, for every \(r\)-balanced set \(S\subset V\),
\[
    \bigcap_{v_i\in S}U_i\subset U_{m+1}.
\]

\begin{theorem}[Scarf's theorem for generalized cooperative games]\label{GenScTh}
Let \((U,V,r)\) be a balanced game with \(V=\{v_1,\dots,v_{m+1}\}\), where \(v_{m+1}=r\). Then the core of \((U,V,r)\) is nonempty if and only if the game is homotopically nontrivial.
\end{theorem}

Thus, in the generalized setting, the existence of the fractional core is controlled by the topological nontriviality of the cover defined by the game. When the game is balanced and contains the grand firm \(v_{m+1}=r\), this gives an analogue of Scarf's theorem for the ordinary core.

\subsection*{Structure of the paper}

The paper is structured as follows. In Section 2, we provide the necessary
topological preliminaries. In Section 2.1, we discuss the geometric definition
of balanced sets. In Section 2.2, we define the homotopy class of a cover
relative to a balanced configuration. In Section 2.3, we define
homotopically nontrivial covers. In Section 2.4, we discuss the local index
of a balanced intersection of a cover.

In Section 3, we prove the main theorem. More precisely, in Section 3.1, we
define the cover associated with a game. In Section 3.2, we explain the
connection between fractional cores and balanced intersections of this cover.
In Section 3.3, we introduce the normalization of the resource vectors. In
Section 3.4, we give the final argument and finish the proof. In Section 4,
we discuss examples and applications. In Section 4.1, we recover the
classical Scarf theorem. In Section 4.2, we introduce games of degree \(k\)
using the local index of connected components of balanced intersections. In
Section 4.3, we discuss an example of a game whose fractional core is
guaranteed by a map from \(S^3\) to \(S^2\) with nontrivial Hopf invariant.
Finally, in Section 5, we discuss further research directions.

\section{Topological Preliminaries}

This section recalls the topological notions used below. All covers are
finite and indexed by the same set \([m]\). The construction of homotopy
classes of covers is due to Musin \cite{MusH}; the results involving
balanced sets are taken from \cite{Blu}.

\subsection{Balanced configurations}

Unless stated otherwise, throughout this section we fix a labeled finite
set \(V=\{v_1,\dots,v_m\}\subset\mathbb{R}^q\) and a point
\(r\in\operatorname{relint}\conv(V)\). The \emph{rank} of the pair
\((V,r)\) is \(\operatorname{rank}(V,r)=\dim\conv(V)\). Equivalently, it
is the dimension of the vector space
\(\operatorname{span}\{v_i-r\}_{i=1}^m\). We assume that the fixed pair
\((V,r)\) has rank \(d\).

For \(I\subset[m]\), put \(V_I=\{v_i\mid i\in I\}\). The set \(I\), or the
corresponding labeled subset \(V_I\), is called
\emph{convex \(r\)-balanced} if \(r\in\conv(V_I)\). The family of all
convex \(r\)-balanced subsets is denoted by
\(\conv\bs(V,r)\).

Two pairs \((V,r)\) and \((V',r')\), with the same index set \([m]\), are
called \emph{\(\conv\bs\)-equivalent} if they have the same convex balanced
subsets of indices. By Corollary 3.7 of \cite{Blu}, two
\(\conv\bs\)-equivalent pairs have the same rank.

\subsection{Homotopy class of a cover}

Let \(X\) be a path-connected subset of a Euclidean space, with the
relative topology, and let \(\mathcal{F}=\{F_1,\dots,F_m\}\) be an open
cover of \(X\). The
\emph{nerve} \(N(\mathcal{F})\) is the simplicial complex with vertex set
\([m]\) such that a nonempty set \(I\subset[m]\) is a simplex precisely
when \(\bigcap_{i\in I}F_i\neq\emptyset\).

A \emph{partition of unity subordinate to} \(\mathcal{F}\) is a family of
continuous functions
\(\Phi=\{\varphi_1,\dots,\varphi_m\}\),
\(\varphi_i:X\to[0,1]\), such that
\(\sum_{i=1}^m\varphi_i(x)=1\) for all \(x\in X\), and
\(\varphi_i(x)=0\) whenever \(x\notin F_i\). Since \(X\) is a subset of a
Euclidean space, such partitions of unity exist for every finite open cover.

Suppose first that the nerve \(N(\mathcal{F})\) contains no simplex from
\(\conv\bs(V,r)\). Define
\(\rho_{\Phi,V}(x)=\sum_{i=1}^m\varphi_i(x)v_i\).
Then \(\rho_{\Phi,V}(x)\neq r\) for every \(x\in X\). Indeed, if
\(\rho_{\Phi,V}(x)=r\), then the set
\(\{i\mid \varphi_i(x)>0\}\) would be a simplex of \(N(\mathcal{F})\) and
would be convex \(r\)-balanced.

Consider the affine subspace \(\operatorname{aff}(V)\) with the origin
placed at \(r\). This is a \(d\)-dimensional Euclidean vector space. We
write \(S^{d-1}\) for its unit sphere. Radial projection from \(r\) gives
the map \(g_{\Phi,V,r}:X\longrightarrow S^{d-1}\) defined by
\[
    g_{\Phi,V,r}(x)=
    \frac{\rho_{\Phi,V}(x)-r}{\|\rho_{\Phi,V}(x)-r\|}.
\]

Here \([X,S^{d-1}]\) denotes the set of homotopy classes of maps from
\(X\) to \(S^{d-1}\), and \(0\) denotes the class of a null-homotopic map.

\begin{definition}
\label{VHomCov}
If \(N(\mathcal{F})\) contains no simplex from \(\conv\bs(V,r)\), the
\emph{homotopy class of \(\mathcal{F}\) with respect to \((V,r)\)} is the
homotopy class of the map \(g_{\Phi,V,r}\). It is denoted by
\([\mathcal{F}_{(V,r)}]:=[g_{\Phi,V,r}]\in[X,S^{d-1}]\). If
\(N(\mathcal{F})\) contains a simplex from \(\conv\bs(V,r)\), we write
\([\mathcal{F}_{(V,r)}]=N/A\).
\end{definition}

Musin proved in \cite{MusH} that the class
\([\mathcal{F}_{(V,r)}]\) does not depend on the choice of the partition
of unity \(\Phi\).

For a finite closed cover \(\mathcal{F}=\{F_1,\dots,F_m\}\) of \(X\), an
\emph{open enlargement} is an open cover
\(\mathcal{G}=\{G_1,\dots,G_m\}\) such that \(F_i\subset G_i\) for all
\(i\) and \(N(\mathcal{G})=N(\mathcal{F})\).
It is shown in \cite{Blu,MusH} that such enlargements exist for finite
closed covers of subsets of Euclidean spaces. The homotopy class of a
closed cover is defined by
\([\mathcal{F}_{(V,r)}]:=[\mathcal{G}_{(V,r)}]\), where
\(\mathcal{G}\) is any open enlargement with the same nerve. By
\cite{MusH}, this definition is independent of the choice of \(\mathcal{G}\).

Suppose now that \(X\) is an oriented closed \((d-1)\)-manifold and that
the class \([\mathcal{F}_{(V,r)}]\) is not \(N/A\). Choose an orientation
of \(\operatorname{aff}(V)\), with the origin placed at \(r\). This
induces an orientation of the sphere \(S^{d-1}\). In this case the
homotopy class \([\mathcal{F}_{(V,r)}]\) is represented by the integer
degree of the map \(g_{\Phi,V,r}:X\to S^{d-1}\). We denote this integer by
\(\deg(\mathcal{F}_{(V,r)})\).

The following theorem is proved in \cite{Blu}.

\begin{theorem}
\label{IndTh}
Let \((V,r)\) and \((V',r')\) be \(\conv\bs\)-equivalent pairs of common
rank \(d\). Let \(X\) be a path-connected subset of a Euclidean space, and
let \(\mathcal{F}=\{F_1,\dots,F_m\}\) be an open or closed cover of \(X\).
Then either both classes \([\mathcal{F}_{(V,r)}]\) and
\([\mathcal{F}_{(V',r')}]\) are undefined, that is, equal to \(N/A\), or
both are defined. In the second case,
\([\mathcal{F}_{(V,r)}]=0\) if and only if
\([\mathcal{F}_{(V',r')}]=0\). If, moreover, \(X\) is an oriented closed
\((d-1)\)-manifold, then
\(|\deg(\mathcal{F}_{(V,r)})|
=|\deg(\mathcal{F}_{(V',r')})|\).
\end{theorem}

\subsection{Homotopically nontrivial covers}

Let \(X\) be a Euclidean space, for example an affine subspace of
\(\mathbb{R}^N\), and let
\(\mathcal{F}=\{F_1,\dots,F_m\}\) be an open or closed cover of \(X\). For
a continuous map \(f:B^k\to X\), where \(B^k\) is the closed \(k\)-ball,
the pullback cover is
\(f^*\mathcal{F}=\{f^{-1}(F_1),\dots,f^{-1}(F_m)\}\). Its restriction to
the boundary \(\partial B^k\) is denoted by
\(f^*\mathcal{F}|_{\partial B^k}\).

\begin{definition}
\label{EucNomNontriv}
The cover \(\mathcal{F}\) is \emph{homotopically nontrivial with respect
to \((V,r)\)}, where \(r\in\operatorname{relint}\conv(V)\), if there
exist \(k\geq 1\) and a continuous map \(f:B^k\to X\) such that
\([(f^*\mathcal{F}|_{\partial B^k})_{(V,r)}]=N/A\) or
\([(f^*\mathcal{F}|_{\partial B^k})_{(V,r)}]\neq0\).
In the second case, \(\neq0\) means that the corresponding map
\(\partial B^k\to S^{d-1}\) is not null-homotopic.
\end{definition}

The following theorem, proved in \cite{Blu}, is the main topological tool
used in the proof of the main result.

\begin{theorem}
\label{NTC}
Let \(\mathcal{F}=\{F_1,\dots,F_m\}\) be an open or closed cover of a
Euclidean space \(X\). Then \(\mathcal{F}\) is homotopically nontrivial
with respect to \((V,r)\) if and only if there exists
\(S\in\conv\bs(V,r)\) such that \(\bigcap_{v_i\in S}F_i\neq\emptyset\).
\end{theorem}

Thus homotopical nontriviality is exactly the topological condition which
detects a convex balanced intersection.

\subsection{Local index}

Let \(X\simeq\mathbb{R}^d\), and let
\(\mathcal{F}=\{F_1,\dots,F_m\}\) be a closed cover of \(X\). In this
subsection we assume that the pair \((V,r)\) has rank \(d\). Define the
balanced intersection set
\[
    Z_{(V,r)}(\mathcal{F})
    =
    \bigcup_{S\in\conv\bs(V,r)}
    \bigcap_{v_i\in S}F_i.
\]
The cover \(\mathcal{F}\) is called \emph{isolated} if every connected
component of \(Z_{(V,r)}(\mathcal{F})\) is compact and has a neighborhood
which does not meet any other connected component.

Let \(C\) be a connected component of \(Z_{(V,r)}(\mathcal{F})\). Choose a
compact \(d\)-manifold \(M\subset X\) with smooth boundary such that
\(C\subset\operatorname{int}(M)\) and
\(M\cap Z_{(V,r)}(\mathcal{F})=C\). Then \(\partial M\) has no balanced
intersection, and therefore the degree of the restricted cover on
\(\partial M\) is defined.

\begin{definition}
The \emph{index} of \(C\) with respect to \((V,r)\) is
\[
    \operatorname{ind}_{(V,r)}(C)
    =
    \deg\!\left((\mathcal{F}|_{\partial M})_{(V,r)}\right).
\]
\end{definition}

The well-definedness and invariance properties of the index are proved in
\cite{Blu}. Namely, the index does not depend on the choice of \(M\).
Moreover, if \((V',r')\) is \(\conv\bs\)-equivalent to \((V,r)\), then
\(Z_{(V,r)}(\mathcal{F})=Z_{(V',r')}(\mathcal{F})\), and for every
connected component \(C\) of this set one has
\(|\operatorname{ind}_{(V,r)}(C)|
=|\operatorname{ind}_{(V',r')}(C)|\).

The next theorem is also proved in \cite{Blu}.

\begin{theorem}
\label{IndexSum}
Let \(\mathcal{F}\) be a closed isolated cover of \(X\simeq\mathbb{R}^d\),
and let \(M\subset X\) be a compact \(d\)-manifold with smooth boundary
such that \(\partial M\cap Z_{(V,r)}(\mathcal{F})=\emptyset\). If
\(\deg((\mathcal{F}|_{\partial M})_{(V,r)})=k\),
then
\[
    \sum_C \operatorname{ind}_{(V,r)}(C)=k,
\]
where the sum is taken over all connected components
\(C\subset\operatorname{int}(M)\) of
\(Z_{(V,r)}(\mathcal{F})\).
\end{theorem}
 
\section{Proof of the main result}

Throughout this section, \((U,V,r)\) is a game, where
\(U=\{U_1,\dots,U_m\}\) is a family of
closed, proper, nonempty and comprehensive subsets of \(\mathbb{R}^n\),
and \(V=\{v_1,\dots,v_m\}\subset \mathbb{R}^d\) is a set of firms satisfying
\(\langle v_i,\mathbf{1}_{[d]}\rangle>0\), whose cone is full-dimensional
and contains \(r\) in its interior. We also use
the notions of \(r\)-balanced sets, core, and fractional core from the
introduction.

In this section, we prove the main results stated in the introduction. For
this purpose, we introduce the necessary constructions and definitions,
including the cover associated with a game and the homotopical
nontriviality of a game. The structure of this section is as follows:
\begin{enumerate}
    \item We construct the cover associated with a game.
    \item We identify its balanced intersections with points of the
    fractional core.
    \item We normalize the set of firms and apply the topological results
    from the previous section.
    \item We prove the Main Theorem and its Scarf-type consequence.
\end{enumerate}

\subsection{The cover associated with a game}

We first associate with the game a cover of the hyperplane
\(H_0=\{x\in\mathbb{R}^n\mid
\langle x,\mathbf{1}_{[n]}\rangle=0\}\simeq\mathbb{R}^{n-1}\). For
\(x\in\mathbb{R}^n\), define
\[
    \tau(x)=\sup\left\{t\in\mathbb{R}\mid
    x+t\mathbf{1}_{[n]}\in\bigcup_{i=1}^m U_i\right\}
\]
and put \(p(x)=x+\tau(x)\mathbf{1}_{[n]}\).
Thus, \(p(x)\) is the maximal feasible point on the line through \(x\)
parallel to \(\mathbf{1}_{[n]}\).

\begin{lemma}
\label{CoverInducing}
The collection \(U=\{U_1,\dots,U_m\}\) defines a closed cover
\(F(U)=\{F_1,\dots,F_m\}\) of \(H_0\), where
\(F_i=F(U_i)=\{x\in H_0\mid p(x)\in U_i\}\).
\end{lemma}

\begin{proof}
The function \(\tau\) is well-defined. Indeed, nonemptiness and
comprehensiveness of the sets \(U_i\) imply that
\(x+t\mathbf{1}_{[n]}\in\bigcup_i U_i\) for sufficiently small \(t\).
Since every \(U_i\) is proper and comprehensive, this fails for sufficiently
large \(t\). Since the sets \(U_i\) are closed, the supremum is attained.

Moreover, comprehensiveness gives
\(|\tau(x)-\tau(y)|\leq \|x-y\|_\infty\).
Hence \(\tau\), and therefore \(p\), is continuous. It follows that each
\(F_i=p^{-1}(U_i)\cap H_0\) is closed. Finally, \(p(x)\) belongs to some
\(U_i\) for every \(x\), so the sets \(F_i\) cover \(H_0\).
\end{proof}

\subsection{Fractional cores and balanced intersections}

We identify each firm with its resource vector \(v_i\), and hence identify
sets of firms with subsets of \(V\). Define
\[
    Z(U,V,r)=
    \bigcup_{S\in\bs(V,r)}\bigcap_{v_i\in S}F_i.
\]

\begin{theorem}
\label{equilpoint}
The map \(p\) gives a bijection between \(Z(U,V,r)\) and the
fractional core of the game. In particular, the fractional core is
nonempty if and only if \(Z(U,V,r)\) is nonempty.
\end{theorem}

\begin{proof}
Let \(x\in Z(U,V,r)\). Then \(p(x)\) belongs to all \(U_i\) from some
\(r\)-balanced family, and hence is admissible. Moreover, \(p(x)\)
cannot lie in the interior of any \(U_i\), since otherwise it could be moved
slightly farther in the direction \(\mathbf{1}_{[n]}\), contradicting the
definition of \(\tau(x)\). Thus \(p(x)\) belongs to the fractional core.

Conversely, let \(y\) belong to the fractional core and project it onto
\(H_0\) by setting
\(x=y-\frac{1}{n}\langle y,\mathbf{1}_{[n]}\rangle\mathbf{1}_{[n]}\).
Since \(y\) is admissible, \(p(x)\geq y\). If \(p(x)\neq y\), then \(p(x)\)
is strictly larger than \(y\) in every coordinate and belongs to some
\(U_i\). Comprehensiveness would imply that \(y\) lies in the interior of
\(U_i\), a contradiction. Hence \(p(x)=y\). The balanced family witnessing
the admissibility of \(y\) now shows that \(x\in Z(U,V,r)\). Since the
projection onto \(H_0\) is unique, this correspondence is a bijection.
\end{proof}

\subsection{Normalization of the firms}

To apply the topological results from Section 2, we normalize the resource
vectors. Put
\(\bar v_i=v_i/\langle v_i,\mathbf{1}_{[d]}\rangle\),
\(\bar r=r/\langle r,\mathbf{1}_{[d]}\rangle\), and
\(\bar V=\{\bar v_1,\dots,\bar v_m\}\).
All points of \(\bar V\), as well as \(\bar r\), lie in the hyperplane
\(\langle x,\mathbf{1}_{[d]}\rangle=1\). Moreover,
\(\bar r\in\operatorname{relint}\conv(\bar V)\).

\begin{lemma}
\label{NormalizationLemma}
For every set of indices, the corresponding set of firms is
\(r\)-balanced in \(V\) if and only if the corresponding subset of
\(\bar V\) is convexly \(\bar r\)-balanced. Consequently, the games
\((U,V,r)\) and \((U,\bar V,\bar r)\) have the same fractional core.
\end{lemma}

\begin{proof}
Write \(a_i=\langle v_i,\mathbf{1}_{[d]}\rangle\) and
\(a=\langle r,\mathbf{1}_{[d]}\rangle\). If
\(r=\sum_i\lambda_i v_i\), then
\(\bar r=\sum_i(\lambda_i a_i/a)\bar v_i\) and
\(\sum_i\lambda_i a_i/a=1\).
The converse follows by reversing this calculation. Thus the balanced
families are the same. Since the sets \(U_i\) are unchanged, the
fractional cores are also the same.
\end{proof}

We call a pair \((V,r)\) \emph{convex} if
\(\bs(V,r)=\conv\bs(V,r)\). The normalized pair
\((\bar V,\bar r)\) is convex.

\begin{remark}
Two indexed families \(U=\{U_1,\dots,U_m\}\) and
\(U'=\{U'_1,\dots,U'_m\}\) are called \emph{cover-equivalent} if they
induce the same cover, that is, \(F(U_i)=F(U'_i)\) for every \(i\).
Two pairs \((V,r)\) and \((V',r')\) are called
\emph{balanced-equivalent} if they have the same balanced
sets of indices. Two games are called \emph{equivalent} if their allocation
families are cover-equivalent and their resource pairs are
balanced-equivalent.

Thus, for the fractional-core existence questions considered here, a game
is determined by the induced cover of
\(H_0\simeq\mathbb{R}^{n-1}\) and the family of its balanced sets.
Equivalent games have the same balanced intersections, and hence their
fractional cores are simultaneously empty or nonempty.
\end{remark}

\subsection{Homotopical nontriviality and the main results}

\begin{definition}
\label{GameHomNontriv}
The game \((U,V,r)\) is \emph{homotopically nontrivial} if the cover
\(F(U)_{(\bar V,\bar r)}\) is homotopically nontrivial in the sense of Definition
\ref{EucNomNontriv}. Otherwise, the game is called
\emph{homotopically trivial}.
\end{definition}

\begin{proof}[Proof of Theorem \ref{GenTheorem}]
By Theorem \ref{NTC}, the cover \(F(U)\) is homotopically nontrivial with
respect to \((\bar V,\bar r)\) exactly when it has a convexly
\(\bar r\)-balanced intersection. By Lemma \ref{NormalizationLemma}, this
is equivalent to the existence of an \(r\)-balanced intersection
for the original game. By Theorem \ref{equilpoint}, this is equivalent to
nonemptiness of the fractional core.
\end{proof}

\begin{proof}[Proof of Theorem \ref{GenScTh}]
Suppose that the game is homotopically nontrivial. By Theorem
\ref{GenTheorem}, it has a fractional-core point \(x\). This point belongs
to the utility sets of some \(r\)-balanced family. Since the game
is balanced, \(x\in U_{m+1}\). As \(x\) lies in none of the interiors
\(\operatorname{int}(U_i)\), it belongs to the core.

Conversely, let \(x\) belong to the core. The singleton
\(\{v_{m+1}\}=\{r\}\) is \(r\)-balanced, so \(x\) is admissible.
Since a core point lies in none of the interiors
\(\operatorname{int}(U_i)\), it belongs to the fractional core. Theorem
\ref{GenTheorem} now implies that the game is homotopically nontrivial.
\end{proof}

\begin{proposition}
\label{GameEquivalenceInvariant}
Let \((U,V,r)\) and \((U',V',r')\) be equivalent games. Then one of them is
homotopically nontrivial if and only if the other one is homotopically
nontrivial.
\end{proposition}

\begin{proof}
Equivalent games induce the same cover and have
balanced-equivalent resource pairs. By Lemma
\ref{NormalizationLemma}, their normalized resource pairs are
\(\conv\bs\)-equivalent. By Corollary 3.7 of \cite{Blu}, these pairs have
the same rank.
Therefore Theorem \ref{IndTh} applies to the common induced cover. It gives
the equivalence of homotopical nontriviality. The \(N/A\) case is
preserved because the two pairs have the same balanced sets of indices.
\end{proof}

\begin{proposition}
\label{CoverRealization}
Let \(\mathcal{F}=\{F_1,\dots,F_m\}\) be a finite closed cover of \(H_0\)
by nonempty sets. Let \(V=\{v_1,\dots,v_m\}\) and \(r\) satisfy the
resource assumptions in the definition of a game. Then there exists a game
\((U,V,r)\) such that \(F(U)=\mathcal{F}\).
\end{proposition}

\begin{proof}
Put \(U_i=F_i-\mathbb{R}^n_+\). The sets \(U_i\) belong to the lower
half-space \(H_0^-=\{x\in\mathbb{R}^n\mid
\langle x,\mathbf{1}_{[n]}\rangle\leq0\}\). Hence \(U_i\) is closed,
nonempty, comprehensive and proper by construction.

Thus \((U,V,r)\) is a game. Finally, the equality \(F(U_i)=F_i\) follows
directly from the construction and from the fact that the projection \(p\)
is fixed on \(H_0\).
\end{proof}
 
\section{Examples}

In this section, we consider examples and applications of Theorems
\ref{GenTheorem} and \ref{GenScTh}.

\subsection{Scarf's theorem}

We begin by explaining how Theorem \ref{GenScTh} recovers Scarf's theorem.
Let
\(([n],V)\) be a standard cooperative NTU game, and let \(U(S)\) be the
cylindrical feasible set associated with a coalition \(S\). We regard each
nonempty coalition \(S\subset[n]\) as a firm, assign to it the resource
vector \(v_S=\mathbf{1}_S/|S|\), and put \(r=\mathbf{1}_{[n]}/n\).

Order the firms so that the last one is the grand coalition \(S=[n]\).
Then \(v_{m+1}=v_{[n]}=r\), and the distinguished set in Theorem
\ref{GenScTh} is
\[
    U_{m+1}=U([n])=V([n]).
\]
Thus the generalized core
\(U_{m+1}\setminus\bigcup_i\operatorname{int}(U_i)\) is exactly the usual
core of the NTU game.

Balanced families of coalitions are the same as \(r\)-balanced families of
the vectors \(v_S\), after a rescaling of the weights. Hence the usual
balancedness condition for an NTU game is the balancedness condition from
Theorem \ref{GenScTh}.

The cover induced by the NTU game is the usual KKMS cover of a large
simplex. On the boundary, the corresponding map sends every face to itself.
Therefore its degree is \(1\), and the induced cover is homotopically
nontrivial. By Theorem \ref{GenScTh}, the core is nonempty. This is exactly
Scarf's theorem.

\subsection{Games of degree \(k\)}

We now explain how the local index from Section 2 applies to generalized
games. In this subsection, the utility sets are subsets of
\(\mathbb{R}^{n+1}\). Hence the induced cover \(F(U)\) is a cover of
\(H_0\simeq\mathbb{R}^n\). Let \((\bar V,\bar r)\) be the normalized
resource pair of the game. We assume that \((\bar V,\bar r)\) has rank
\(n\), and that \(F(U)\) is isolated with respect to this pair. Such a game
will be called \emph{isolated}.

By Theorem \ref{equilpoint}, every connected component \(C\) of the
fractional core corresponds to a connected component \(\widehat C\) of the
balanced intersection set of \(F(U)\). We define
\(\operatorname{ind}(C)=\operatorname{ind}_{(\bar V,\bar r)}(\widehat C)\).
The component \(C\) is called \emph{essential} if this index is nonzero,
and \emph{unessential} otherwise.

An isolated game is called \emph{essential} if at least one connected
component of its fractional core is essential. It is called
\emph{regular} if every connected component \(C\) of its fractional core
satisfies \(|\operatorname{ind}(C)|=1\).

In this terminology, Shapley's proof \cite{Sh} shows that every regular
NTU game is essential. Indeed, the associated KKMS cover has degree \(1\), and hence
the sum of the local indices of the fractional-core components is equal to
\(1\).

Overall, for generalized isolated games this can be expressed in the
following theorem.

\begin{theorem}
Let \((U,V,r)\) be an isolated game. Let \(B_M\subset H_0\) be the closed
ball of radius \(M\). If, for all sufficiently large \(M\), the degree
\(\deg((F(U)|_{\partial B_M})_{(\bar V,\bar r)})\) is defined and equal to
\(k\neq0\),
then the game \((U,V,r)\) is essential. Furthermore, if the game is
regular, then it has at least \(|k|\) connected components of the
fractional core.
\end{theorem}

\begin{proof}
For sufficiently large \(M\), apply Theorem \ref{IndexSum} to the cover
\(F(U)\) on \(H_0\simeq\mathbb{R}^n\) and to the pair
\((\bar V,\bar r)\). The sum of the indices of the components inside
\(B_M\) is equal to \(k\). Hence at least one component has nonzero index.
If the game is regular, each nonzero component contributes \(\pm1\), and
therefore there are at least \(|k|\) such components. By Theorem
\ref{equilpoint}, these components correspond to connected components of
the fractional core.
\end{proof}

\subsection{The Hopf Fibration}

We now give an example of a game whose associated cover gives rise to a
homotopy invariant different from the mapping degree. Let
\(V=\{v_1,v_2,v_3,v_4\}\) be the vertices of a tetrahedron, which we take to
be the unit basis vectors in \(\mathbb{R}^4\), and let
\(r_0=\frac{1}{4}(v_1+v_2+v_3+v_4)\). The pair \((V,r_0)\) is already
normalized and has rank \(3\). The boundary
\(\partial\conv(V)\) is a two-dimensional sphere, and radial projection
from \(r_0\) identifies it with the sphere used in the definition of
homotopy invariants of covers relative to the game.

The standard construction of covers from triangulations and labelings,
used in \cite{Blu,MusH}, gives a four-element closed cover
\(\mathcal{F}=\{F_1,F_2,
F_3,F_4\}\) of \(\mathbb{S}^3\) whose homotopy class
with respect to the vertices of the tetrahedron and the point \(r_0\) has
Hopf invariant \(1\). Equivalently,
\([(\mathcal{F})_{(V,r_0)}]\) is a class in \(\pi_3(\mathbb{S}^2)\) with
Hopf invariant \(1\), and in particular it is nonzero. A concrete
simplicial model of such a cover is obtained from the triangulated Hopf map
described in \cite{Madahar}.

Embed \(\mathbb{S}^3\) as the unit sphere in
\(H_0\subset\mathbb{R}^5\), so that \(H_0\simeq\mathbb{R}^4\). We extend
the boundary cover to a closed cover
\(\mathcal{G}=\{G_1,G_2,G_3,G_4\}\) of \(H_0\). Inside the ball bounded by
\(\mathbb{S}^3\), choose any closed extension. Outside this ball, define
the cover conically by putting \(tx\in G_i\) for every \(t\geq1\) and
every \(x\in F_i\). Then \(G_i\cap\mathbb{S}^3=F_i\), and
\(\mathcal{G}\) is
homotopically nontrivial with respect to \((V,r_0)\) by Definition
\ref{EucNomNontriv}.

By Proposition \ref{CoverRealization}, applied in the payoff space
\(\mathbb{R}^5\) with the resource pair \((V,r_0)\), there exists a game
\((U,V,r_0)\) whose induced cover is \(\mathcal{G}\). Explicitly, one may
take \(U_i=G_i-\mathbb{R}^5_+\). Since \((V,r_0)\) is normalized, Theorem
\ref{GenTheorem} implies that this game has a nonempty fractional core.

This example shows that fractional-core existence can be forced by a higher
homotopy class; here the class lies in \(\pi_3(\mathbb{S}^2)\) and has
Hopf invariant \(1\).

\section{Further Research}

The results of this paper suggest several directions for further research.

\subsection{Regularity of classical NTU games}

Recall the notions of isolated and regular games introduced in the
subsection on games of degree \(k\).

\begin{conjecture}
Consider a standard cooperative NTU game and remove the firm corresponding
to the grand coalition. Then the cover induced by the remaining firms is
isolated. Furthermore, every such game is regular. In other words, every
connected component of its fractional core has absolute index one.
\end{conjecture}

\subsection{Combinatorial applications}

Scarf's lemma has important applications in combinatorics and matching
theory; see, for example, \cite{AharoniFleiner}. It would be interesting to
find analogous applications of Theorem \ref{GenTheorem}, in particular to
generalized matching problems and other combinatorial models of stability.

\subsection{Applications in game theory and economics}

The examples above show that generalized games may give rise to covers of
arbitrary degree and even to higher homotopy classes. It would be interesting
to find concrete games or economic situations that can be naturally and
precisely described by generalized cooperative games and whose associated
covers represent nontrivial higher homotopy classes.

\section*{Acknowledgments}

We are grateful to Hervé Moulin, Fedor Sandomirskii, and Nikita Kalinin
for reading the first draft of the paper and for helpful comments.

The first author was supported by the HSE University Basic Research
Program.

\section*{Declaration on the Use of AI}

The authors used generative AI tools to assist in verifying the proofs and
improving the exposition and clarity.

\end{document}